\newcommand{\Mu}{%
\mathrel{\reflectbox{\rotatebox[origin=c]{180}{$\Omega$}}}}
\newcommand{\eop}{\bigstar}  % end-of-proof
\newcommand{\otp}[1]{\hbox{otp($#1$)}}
\newcommand{\cf}{{\rm cf}}
\newenvironment{proof}{\noindent{\bf Proof.}}{\par\bigskip}
\newenvironment{proof-}{\noindent{\bf Proof}}{\par\bigskip}
\newtheorem{THEOREM}{Theorem}[section]
\newtheorem{Conclusion}[THEOREM]{Conclusion}
\newtheorem{Hypothesis}[THEOREM]{Hypothesis}
\newtheorem{LEMMA}[THEOREM]{Lemma}
\newtheorem{Main Theorem}[THEOREM]{Main Theorem}
\newenvironment{main Theorem}{\begin{Main Theorem}} 
{\end{Main Theorem}}
\newtheorem{Theorem}[THEOREM]{Theorem}
\newenvironment{theorem}{\begin{Theorem}}{\end{Theorem}}
\newtheorem{Definition}[THEOREM]{Definition}
\newenvironment{definition}{\begin{Definition}}{\end{Definition}}
\newtheorem{Conventions}[THEOREM]{Conventions}
\newtheorem{Main Definition}[THEOREM]{Main Definition}
\newenvironment{main definition}{\begin{Main Definition}}
{\end{Main Definition}}
\newtheorem{Lemma}[THEOREM]{Lemma}
\newtheorem{Notation}[THEOREM]{Notation}
\newtheorem{Convention}[THEOREM]{Convention}
\newtheorem{Note}[THEOREM]{Note}
\newtheorem{Observation}[THEOREM]{Observation}
\newtheorem{Remark}[THEOREM]{Remark}
\newtheorem{Question}[THEOREM]{Question}
\newtheorem{Main Fact}[THEOREM]{Main Fact}
\newenvironment{main Fact}{\begin{Main Fact}}{\end{Main Fact}}
\newtheorem{Fact}[THEOREM]{Fact}
\newtheorem{Subfact}[THEOREM]{Subfact}
\newtheorem{Claim}[THEOREM]{Claim}
\newtheorem{Main Claim}[THEOREM]{Main Claim}
\newenvironment{main claim}{\begin{Main Claim}}{\end{Main Claim}}
\newtheorem{Crucial Claim}[THEOREM]{Crucial Claim}
\newenvironment{crucial claim}{\begin{Crucial Claim}}{\end{Crucial Claim}}
\newtheorem{Subclaim}[THEOREM]{Subclaim}
\newtheorem{Sublemma}[THEOREM]{Sublemma}
\newtheorem{Corollary}[THEOREM]{Corollary}
\newtheorem{Example}[THEOREM]{Example}
\newtheorem{Problem}[THEOREM]{Problem}
\newtheorem{Proposition}[THEOREM]{Proposition}
\newtheorem{Conjecture}[THEOREM]{Conjecture}
\newtheorem{Discussion}[THEOREM]{Discussion}
\newenvironment{Proof of the Subfact}
{\noindent{\bf Proof of the Subfact.}}{\par\bigskip}
\newenvironment{Proof of the Theorem}
{\noindent{\bf Proof of the Theorem.}}{\par\bigskip}
\newenvironment{Proof of the Proposition}
{\noindent{\bf Proof of the Proposition.}}{\par\bigskip}
\newenvironment{Proof of the Conclusion}
{\noindent{\bf Proof of the Conclusion.}}{\par\bigskip}
\newenvironment{Proof of the Observation}
{\noindent{\bf Proof of the Observation.}}{\par\bigskip}
\newenvironment{Proof of the Fact}
{\noindent{\bf Proof of the Fact.}}{\par\bigskip}
\newenvironment{Proof of the Lemma}
{\noindent{\bf Proof of the Lemma.}}{\par\bigskip}
\newenvironment{Proof of the Claim}
{\noindent{\bf Proof of the Claim.}}{\par\bigskip}
\newenvironment{Proof of the Corollary}
{\noindent{\bf Proof of the Corollary.}}{\par\bigskip}
\newenvironment{Proof of the Subclaim}
{\noindent{\bf Proof of the Subclaim.}}{\par\medskip}
\newenvironment{Proof of the Main Claim}
{\noindent{\bf Proof of the Main Claim.}}{\par\bigskip}
\newenvironment{Proof of the Crucial Claim}
{\noindent{\bf Proof of the Crucial Claim.}}{\par\bigskip}
\newcommand{\hgt}{{\rm ht}}
\newcommand{\rest}{\upharpoonright}  % restriction
\newcommand{\deq}{\buildrel{\rm def}\over =}
\newcommand{\FF}{{\mathcal F}}
\def\mathunderaccent#1#2 {\let\theaccent#1\skewfactor#2
\mathpalette\putaccentunder}
\def\putaccentunder#1#2{\oalign{$#1#2$\crcr\hidewidth
\vbox to.2ex{\hbox{$#1\skew\skewfactor\theaccent{}$}\vss}\hidewidth}}
\begin{document}

\title{Are all natural numbers the same?}

\author{Mirna D\v zamonja\\
\\
Institut d'Histoire et de Philosophie des Sciences et des Techniques\\ 
CNRS-Universit\' e Paris 1 Panth{\'e}on-Sorbonne\\
13 Rue de Four, 75006 Paris, France\\
and\\
Institute of Mathematics\\
Czech Academy of Sciences\\
{\v Z}itn{\'a} 25115 67 Prague, Czech Republic\\
{\scriptsize {mirna.dzamonja@univ-paris1.fr}, {https://www.logiqueconsult.eu}}}

\maketitle

\begin{abstract} Recently there have been several attempts to develop forcing axioms analogous to the proper forcing axiom (PFA) for cardinals of the form
$\aleph_n$ where $n>1$. We investigate the difficulties of doing this and survey some of the successes.
\footnote{This research was supported by the GA{\v C}R project EXPRO 20-31529X and RVO: 67985840. The author thanks the referee for careful reading of the manuscript and thoughtful remarks.}
\end{abstract}

%\linenumbers

\section{Introduction} Forcing axioms are statements shown consistent with ZFC using ZFC or ZFC plus some large cardinals
and which imply that the universe of set theory is saturated for a certain kind of generic extension. The prototypical example is Martin Axiom MA$(\kappa)$, which states that for every ccc forcing $\mathbb P$  and every family $\mathcal F$ of $\kappa$ many dense sets in $\mathbb P$, there is a filter of $\mathbb P$ that intersects every set in $\mathcal F$. The consistency of this axiom follows from the consistency of ZFC; the axiom implies that $2^{\aleph_0}>\kappa$ (see \cite{fasttrack} or 
\cite{kunen} for expositions). The advantage of having such an axiom is that rather than doing an argument using iterated forcing every time that one wants to show a consistency result, one can simply assume as a black box that one has the given axiom and proceed to obtain consistency results by purely combinatorial means. Such is for example the approach of the reference book \cite{Fremlin}.

However, many combinatorial statements require more than a simple ccc forcing to be shown consistent. A typical example is the statement that every two $\aleph_1$-dense subsets of the reals are isomorphic, shown consistent by James E. Baumgartner in 
\cite{Baum-alep1d}, but whose consistency does not follow from Martin Axiom, as shown by Uri Abraham and Saharon Shelah in 
\cite{AvrShe-notMA}. Baumgartner's argument used a particular instance of an axiom stronger than MA, namely the proper forcing axiom PFA\footnote{The original paper does not specify an axiom but spells out the specific forcing to use. PFA was discovered afterwards.}. This axiom was proved consistent modulo the existence of a supercompact cardinal, see \cite{fasttrack} for details and the history of this problem. 

If one wishes to obtain the consistency of combinatorial statements about $\aleph_n$ for $n>1$, things become much more difficult. On the level of MA, one can get by using arguments rather analogous to the one used to show the consistency of MA. In this way one proves various  versions of MA generalised to $\aleph_n$-cc forcing which is $(<\aleph_n)$-closed and satisfies some additional properties, such as the ones considered in \cite{Sh80} or \cite{5authorsforcing}. This is very restrictive and certainly does not give any hope of recovering the wealth of the consequences of PFA that are known to hold on $\aleph_1$, for example all the results in the first nine chapters of \cite{Sh_P}. As for generalising PFA itself directly, this cannot be done in the same way that generalisations of MA have been obtained, since PFA decides the value of the continuum to be $\aleph_2$ (this was shown by Stevo Todor{\v c}evi\'c and Boban Veli{\v c}kovi\'c, as reported in 
the book \cite{stevoOCA} and in the paper \cite{Vel_FACA}). Hence we cannot have PFA for families of $\aleph_2$ many dense sets.

One of the most striking results in the theory of forcing in recent years was discovered by Itay Neeman in \cite{Neeman}. He found a new way to iterate proper forcing, using two kinds of models as side conditions and finite supports. This gave a new proof of the consistency (still modulo a supercompact cardinal) of PFA. Doing this iteration with finite supports rather than the classical countable supports gives much more control and opens many directions. This development has resuscitated the hope that proper forcing has analogues for $\aleph_2$ and other values of $\aleph_n$ for $n>1$. Indeed, Neeman's techniques are of such importance that they have already led to a number of applications both in set theory and in set-theoretic topology (see Alan Dow's results in \cite{Dowsideconditions}, Neeman's results in \cite{Neemanapplications} and Veli{\v c}kovi{\'c}'s results in \cite{VeVe}, for example), as well as modifications to obtain further results such as  Veli{\v c}kovi{\'c}'s \cite{Ve17}. Neeman is optimistic about the generalisation of Baumgartner's results to 
$\aleph_2$-dense subsets of the reals. The enthusiasm of the set-theoretic community for this method is such that Neeman obtained the 2019 Hausdorff medal for it. 

In addition to PFA, stronger forcing axioms such as SPFA have been given a new proof, by Moti Gitik and Menachem Magidor in \cite{GitikMagidor}. Variations of Neeman's side conditions have been introduced as {\em virtual models}
in \cite{Bobanvirtual} and as {\em Magidor models} by
Rahman Mohammadpour and Veli{\v c}kovi{\'c}
in \cite{BobanRahman}. The latter notably allows two cardinals to be preserved in the iteration by using two supercompact cardinals at the outset. In another direction, there are higher forcing axioms corresponding to a notion between MA and PFA, obtained by David Asper{\'o} and Miguel Angel Mota, such as \cite{AsperoMotaMA} which 
we discuss in \S\ref{AsperoMota}. 

Listing all these references we have not exhausted the whole elan of the problem of generalising forcing axioms to higher cardinals, where furthermore we should at least mention works that have preceded Neeman's, such as
the very important paper by William Mitchell \cite{Mitch}, followed by our work with Gregor K. Dolinar \cite{DoDz} and several works using his own method by John Krueger, including \cite{Kruegersquare} and \cite{KRUEGER20181044}. There is also a history of using side conditions in forcing, starting by Todor{\v c}evi{\'c}'s paper \cite{Todorcevic_square} and papers leading or parallel to 
Mitchell's work such as Abraham and Shelah's \cite{AbrShe83} and Sy Friedman's \cite{Fried}. Much material for a survey article about higher forcing axioms exists and we certainly hope that there will be such survey articles which will help the non-experts enter this fascinating subject.

Our intention, however, is not to write such a survey article but, to, by insisting on one revolutionary moment through the work of Neeman and one staple axiom PFA, ask one fundamental question which we believe the experts should address. The question is to know where the limits are. Can one extend forcing axioms stronger than MA, specifically PFA, so that they hold at any 
$\aleph_n$? This matter is subject to discussion and even to precision, since there are many ways to interpret the question. In this article we point out some difficulties that one would have to overcome if one wants to have analogues of PFA for cardinals of the form $\aleph_n$ for $n>1$. We also review some known positive results which, under various circumstances, do provide forcing axioms for regular cardinals larger than $\aleph_1$. We include some proofs, because on the one hand these are combinatorial proofs that are possible to write in a relatively short space and, on the other hand, we feel that it is good to have them grouped in one place, as sometimes they are not that easy to find in the original references. In addition, it is often the analysis of the structure of the proofs rather than of the bare statements that leads to new developments and conclusions.

The evidence presented here leads us to form our own view on the question of the limits of our techniques. This is that there are very good reasons that forcing axioms such as PFA cannot be extended as far as $\aleph_4$, but on the other hand, different frameworks can be devised to obtain forcing axioms in the presence of further assumptions, such as the principle 
$\diamondsuit$.

\section{Some obstacles, interleaved with successes}\label{obstacles} 
\subsection{A natural version of the higher MM or SPFA fails} One may wonder why we concentrate on extending PFA to 
$\aleph_2$
rather than being more ambitious and extending the stronger axiom such as MM or SPFA.  We recall that Martin's Maximum MM is the forcing axiom which guarantees that for any forcing notion $\Bbb P$ which preserves stationary subsets of
$\omega_1$ and any family $\FF$ of $\aleph_1$ many dense subsets of $\Bbb P$, there is a generic filter 
in $\Bbb P$ intersecting every element of $\FF$. This notion was introduced by Matthew Foreman, Magidor and Shelah in \cite{FMS}. The axiom SPFA is the analogous statement where the property of preserving stationary subsets of
$\omega_1$ is replaced by semi-properness. This is an earlier notion, introduced by Shelah in \cite{Sh:119}. However, in the
consequent paper \cite{Shelah_SPFA}, Shelah proved that the two axioms are equivalent.

David Asper\'o asked if it is consistent to have a forcing axiom which acts on forcing notions that preserve
stationary subsets of $\aleph_1$ and $\aleph_2$, guaranteeing the existence of a filter which intersects any family of 
$\aleph_2$ sets.
Let us use the following notation:

\begin{definition}\label{strongSPFA} Let $\lambda$ be an infinite cardinal. We let FA${}_\lambda$ stand for the axiom which says that for any forcing notion $\Bbb P$ which preserves stationary subsets of all regular cardinals $\mu\le\lambda$ and any
family $\FF$ of $\lambda$-many dense subsets of $\Bbb P$, there is a filter in $\Bbb P$ which intersects every element of
 $\FF$.
\end{definition}

Hence, Asper\'o's question is if FA${}_{\aleph_2}$ is consistent. Shelah \cite{Sh:784} proved the following striking theorem:

\begin{theorem}\label{Shstrikes} If $\lambda>\aleph_1$ is regular, then FA${}_\lambda$ is false.
\end{theorem}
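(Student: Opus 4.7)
The plan is to assume FA${}_\lambda$ and derive a contradiction by exhibiting a forcing $\mathbb{P}$ that preserves stationary subsets of every regular $\mu \le \lambda$ but for which $\lambda$ many dense sets cannot be simultaneously met by a filter in $V$. The guiding intuition is that Shelah's club-guessing theorem, a ZFC theorem for $\lambda > \aleph_1$ regular, provides a concrete combinatorial obstruction: a club-guessing sequence at $S^\lambda_{\omega_1}$ cannot be diagonalized by any club produced in a stationarity-preserving generic extension, whereas FA${}_\lambda$ would produce exactly such a diagonalizing club.

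Concretely, first invoke club-guessing to fix a stationary $S \subseteq S^\lambda_{\omega_1}$ and a sequence $\bar c = \langle c_\delta : \delta \in S \rangle$ with each $c_\delta \subseteq \delta$ cofinal of order type $\omega_1$, such that for every club $E \subseteq \lambda$ the set $\{\delta \in S : c_\delta \subseteq^* E\}$ is stationary. Then define $\mathbb{P}$: a condition is a pair $p = (d^p, \bar h^p)$ with $d^p$ a countable closed bounded subset of $\lambda$ and $\bar h^p = \langle h^p_\delta : \delta \in A^p \rangle$ a countable set of promises, $A^p \subseteq S$ and $h^p_\delta < \delta$, intended to certify that the generic club $D = \bigcup_{p \in G} d^p$ will satisfy $D \cap c_\delta \subseteq h^p_\delta$ for every $\delta \in A^p$. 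The ordering requires $d^q$ to end-extend $d^p$, $A^q \supseteq A^p$, the promises to persist, and the new ordinals in $d^q \setminus d^p$ to be disjoint from $c_\delta$ for every $\delta \in A^p$. The dense sets $\{p : \max(d^p) > \alpha\}$ for $\alpha < \lambda$ together with $\{p : \delta \in A^p\}$ for $\delta \in S$ form a family of $\lambda$ many dense subsets; a filter in $V$ meeting them all yields a club $D \subseteq \lambda$ with $c_\delta \not\subseteq^* D$ for every $\delta \in S$, directly contradicting the club-guessing property of $\bar c$.

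The main obstacle is verifying that $\mathbb{P}$ preserves stationary subsets of every regular $\mu \le \lambda$. Preservation at $\omega_1$ is automatic from $\aleph_1$-closure, since a countable decreasing sequence of conditions has a greatest lower bound obtained by unioning the $d^p$ and the $\bar h^p$ and closing off with the supremum. For $\omega_1 < \mu \le \lambda$ one runs an elementary-submodel argument: given a stationary $T \subseteq \mu$ and a name $\dot E$ for a club of $\mu$, pick a suitably elementary $N$ in $H(\chi)$ with $N \cap \mu \in T$, and build a master-condition-style descent forcing $N \cap \mu \in \dot E$. The promises are designed precisely so that extensions above the thresholds $h^p_\delta$ are always available and the descent does not collide with any $c_\delta$; the technical heart of the argument lies here, and this is also the step where the naive forcing may need to be refined along the lines of Shelah's bookkeeping devices. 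The threshold $\lambda > \aleph_1$ enters because club-guessing at $S^\lambda_{\omega_1}$ is a ZFC theorem only in this range: at $\lambda = \aleph_1$ the analogous obstruction is consistently absent, which matches the fact that PFA, implying FA${}_{\aleph_1}$, is consistent relative to a supercompact cardinal.
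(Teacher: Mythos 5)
The paper does not reproduce a proof of Theorem~\ref{Shstrikes}; it cites Shelah's \cite{Sh:784} and moves on, so there is no in-text argument to compare against. Your high-level plan---take a ZFC club-guessing sequence on $\lambda$ as the obstruction, build a forcing that preserves stationarity at every regular $\mu\le\lambda$ while generically shooting a club through $\lambda$ that misses every ladder cofinally, and apply FA${}_\lambda$ to extract such a club in $V$, contradicting the guessing---is indeed the strategy behind Shelah's theorem, so you have located the right idea. But the implementation has concrete gaps. Most importantly, your club-guessing input is miscalibrated for the crucial case $\lambda=\aleph_2$: the ZFC club-guessing theorem yields a sequence $\langle c_\delta:\delta\in S^\lambda_\kappa\rangle$ with $\otp{c_\delta}=\kappa$ only when $\kappa^+<\lambda$, so taking $\kappa=\omega_1$ as you do forces $\lambda\ge\aleph_3$. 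For $\lambda=\aleph_2$---the very case Asper{\'o} asked about, which motivated the theorem---the available ZFC sequence is the one this paper itself proves in Theorem~\ref{clubguessing}, living on $S^{\omega_2}_\omega$ with ladders of order type $\omega$; your forcing then has to be rebuilt around $\omega$-ladders, which changes the picture.

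Even in the $\omega_1$-ladder setting, your $\sigma$-closure step (``union the $d^p$ and $\bar h^p$ and close off with the supremum'') has a hole: the supremum $\xi$ of a strictly decreasing $\omega$-chain of conditions is a single ordinal of cofinality $\omega$, and nothing in your definition prevents $\xi\in c_\delta$ for some $\delta\in\bigcup_n A^{p_n}$ above its threshold $h_\delta$, in which case the proposed lower bound fails the defining requirement that new points of the closed set avoid the ladders. This requires bookkeeping you do not supply, and for order-type-$\omega$ ladders (which you must use at $\lambda=\aleph_2$) the forcing cannot be $\sigma$-closed in the naive way at all, so even the preservation of stationary subsets of $\omega_1$ becomes nontrivial. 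Finally, and most seriously, you explicitly defer the preservation of stationary subsets of $\mu$ for $\omega_1<\mu\le\lambda$, calling it ``the technical heart of the argument'' which ``may need to be refined along the lines of Shelah's bookkeeping devices.'' That concession is accurate: it is precisely where the actual content of Shelah's proof lives, and without carrying it out the proposal is an outline of an attack rather than a proof.
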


Theorem \ref{Shstrikes} justifies that most of the effort on extending forcing axioms to higher cardinals has been on PFA. Of course, one may imagine some intermediate axioms which would not fall under the spectre of this theorem. For this article, we feel that PFA is already challenging enough that it suffices to concentrate on it.

\subsection{Diamonds}One of the most important combinatorial principles in set theory is Jensen's
diamond principle \cite{Jen}, $\diamondsuit(\lambda)$ which for a regular uncountable cardinal $\lambda$ states:

There is a sequence $\langle A_\alpha:\,\alpha<\lambda\rangle$ of sets such that for every $\alpha$ we have
$A_\alpha\subseteq \alpha$, and which has the guessing power, in the sense that for every unbounded subset $A$ of $\lambda$ there is a stationary set of $\alpha<\lambda$ such that 
\[
A_\alpha=A\cap \alpha.
\]
It is quite easy to see that $\diamondsuit(\lambda^+)$ implies that $2^\lambda=\lambda^+$. Considering the converse of this implication, it is not true that CH implies that $\diamondsuit=\diamondsuit(\omega_1)$ holds. Several different proofs of this by Shelah can be found in his book \cite{Sh_P}, for example in I\S7. In particular, one can force by proper forcing a model in which CH holds but
$\diamondsuit$ fails. However, in \cite{SheDiamonds} Shelah also proved  the following theorem.

\begin{theorem}[Shelah 2010]\label{She10} Suppose that $\lambda\ge\aleph_1$. Then $2^\lambda=\lambda^+$ implies 
$\diamondsuit(\lambda^+)$. 
\end{theorem}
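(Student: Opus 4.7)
The plan is to construct a $\diamondsuit(\lambda^+)$-sequence by combining an enumeration of $\mathcal{P}(\lambda)$ furnished by the hypothesis $2^\lambda = \lambda^+$ with a ZFC club-guessing sequence on $\lambda^+$. First I would fix a regular cardinal $\mu < \lambda$ (available because $\lambda \geq \aleph_1$; for singular $\lambda$ one chooses $\mu$ below $\mathrm{cf}(\lambda)$, which introduces the first wrinkle), set $S = \{\delta < \lambda^+ : \mathrm{cf}(\delta) = \mu\}$, and pick a club-guessing sequence $\bar{C} = \langle C_\delta : \delta \in S\rangle$ with $C_\delta$ cofinal in $\delta$ of order type $\mu$, such that every club $E \subseteq \lambda^+$ satisfies $C_\delta \subseteq E$ for stationarily many $\delta \in S$. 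The existence of such $\bar{C}$ is a classical ZFC theorem of Shelah. Fix also an enumeration $\langle X_\beta : \beta < \lambda^+\rangle$ of $\mathcal{P}(\lambda)$, and for each limit $\alpha < \lambda^+$ with $\alpha \geq \lambda$ a bijection $\pi_\alpha : \alpha \to \lambda$.

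Second, for each $\delta \in S$ and each ``code function'' $F$ ranging over a parameter space of size at most $\lambda^+$, I would define a candidate $A^F_\delta \subseteq \delta$ by decoding the sequence $\langle X_{g_\delta(\beta)} : \beta \in C_\delta\rangle$ (where $g_\delta$ is read off canonically from the order type of $C_\delta$) through $F$ and $\pi_\delta$. The goal is to show that for some choice of $F$ the sequence $\bar{A}^F = \langle A^F_\delta : \delta \in S\rangle$ is a genuine $\diamondsuit(\lambda^+)$-sequence, namely that for every $A \subseteq \lambda^+$ there are stationarily many $\delta \in S$ with $A^F_\delta = A \cap \delta$. A direct attempt at fixed $F$ only yields a weak-diamond-style principle, so one cannot hope to succeed with a single naive coding.

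The main obstacle, and the combinatorial heart of Shelah's argument, is therefore the jump from club-guessing plus cardinal arithmetic to individual-set guessing. The plan is to argue by contradiction: if every candidate $F$ fails, then for each $F$ there is a ``bad'' $A^F \subseteq \lambda^+$ and a club $E^F$ witnessing failure on $E^F \cap S$. One then wants to assemble these witnesses into a single $\lambda^+$-indexed object, use $2^\lambda = \lambda^+$ to bound the number of essentially distinct failure-types (each failure at $\delta$ is coded by an element of $\mathcal{P}(\lambda)$ via $\pi_\delta$), and apply the club-guessing property to produce a $\delta$ at which many failures align into a correct guess, a contradiction. Making this alignment rigorous -- essentially packing all potential diamond sequences into one object and diagonalising against the family of bad $A^F$'s -- is the hard part; Shelah's implementation involves a careful ``black-box'' style extraction whose details do not admit shortcuts.

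I expect two technical difficulties beyond the central diagonalisation. The first is the separation of cases by the cofinality of $\lambda$: when $\lambda$ is singular, the interplay between $\mu$, $\mathrm{cf}(\lambda)$, and the pcf structure of $\lambda$ complicates both the choice of $\bar{C}$ and the decoding step. The second is that the coding must be uniformly definable along $C_\delta$ in a way that is oblivious to how $\delta$ sits inside $\lambda^+$; this forces one to absorb certain parameters into the club $E$ witnessing the guess and to invoke the stronger forms of club-guessing that produce $C_\delta$'s coherent with a fixed scale.
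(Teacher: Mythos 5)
Your plan hinges on club guessing, but the paper's proof of this theorem does not use club guessing at all, and the step you flag as the ``hard part'' is precisely where the paper's real ideas live -- none of which appear in your sketch. The paper first replaces $\diamondsuit(\lambda^+)$ by a function-guessing reformulation $(\ast)$ (Shelah's Claim~2.5(2)): one wants a family $\langle h_\beta : \beta < \lambda^+ \rangle$ in ${}^{<\lambda^+}\lambda^+$ and sets $u_\delta \subseteq \delta$ of size $< \lambda$ for $\delta \in S^{\lambda^+}_{<\lambda}$, so that for every $g : \lambda^+ \to \lambda^+$ the restrictions $g\rest\alpha$ land in the \emph{small} pool $\{h_\beta : \beta \in u_\delta\}$ for unboundedly many $\alpha \in u_\delta$, at stationarily many $\delta$. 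It then enumerates ${}^{<\lambda^+}\lambda^+$ as $\langle f_\beta \rangle$, fixes a bijection $c : {}^\lambda\lambda^+ \to \lambda^+$ with coordinate projections $\pi_\varepsilon$, and for each limit $\delta$ with $\cf(\delta) < \lambda$ fixes an increasing continuous filtration $\delta = \bigcup_{\varepsilon < \lambda} u^\delta_\varepsilon$ by sets of size $< \lambda$. The claim is that a single coordinate $\varepsilon^\ast$ already works. The contradiction argument is the actual content: assuming every $\varepsilon$ has a counterexample $(g_\varepsilon, C_\varepsilon)$, one packs the $\lambda$-many counterexamples into one $g$ via $c$, intersects the $\lambda$-many clubs together with the closure club of the trace function $f_g$, chooses $\delta^\ast$ in the intersection, and then a pigeonhole argument exploiting $\cf(\delta^\ast) < \lambda$ catches unboundedly many $\alpha < \delta^\ast$ (together with $f_g(\alpha)$) inside a single filtration level $u^{\delta^\ast}_{\varepsilon^\ast}$, contradicting $\delta^\ast \in C_{\varepsilon^\ast}$.

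Your outline has the right global shape (enumeration, coding, diagonalisation by contradiction) but stops exactly where the mechanism is needed, so it is a plan with the crucial idea missing rather than an argument. In addition, the auxiliary tools you emphasise are not the ones the paper uses: club guessing plays no role -- the filtration $\langle u^\delta_\varepsilon \rangle_{\varepsilon < \lambda}$ serves a very different purpose than a club-guessing ladder $C_\delta$ of fixed order type $\mu$ -- and the pcf/scale coherence for singular $\lambda$ you anticipate does not arise. The only place singularity of $\lambda$ is delicate is the final pigeonhole on $\cf(\delta^\ast)$, which the paper handles by working over all of $S^{\lambda^+}_{<\lambda}$ rather than at a fixed cofinality $\mu$, so fixing $\mu$ as you do is also a deviation that removes some slack you would want later.
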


In particular, Shelah's result shows that we cannot hope to have a direct lifting of proper forcing to $\aleph_n$ for $n>1$, not even to $\aleph_2$. In fact, the most relevant instances of Theorem \ref{She10} very known already quite early on, by the work of John Gregory \cite{Gregory} and Shelah in \cite{Sh108} and are discussed in \cite{Sh_P} as obstacles to generalising proper forcing. 

\medskip

\begin{proof}(sketch) 
We shall use the notation 
\[
S^{\lambda^+}_{<\lambda}=\{\delta<\lambda^+:\, \cf(\delta)<\lambda\}.
\]
Well known work of Kenneth Kunen (see \cite{kunen}) gives many equivalences of $\diamondsuit(\lambda^+)$, using functions in place of sets or allowing the guessing sets to have more elements than just one.
In the spirit of this work, Shelah shows in Claim 2.5 (2) of \cite{SheDiamonds}, that under the assumptions and the notation we have adopted, the statement $\diamondsuit(\lambda^+)$ is implied by  the following statement: 

$(\ast)$ there are sequences $\langle h_\beta:\,\beta<\lambda^+\rangle$ and $\langle u_\delta:\,\delta \mbox{ limit }\in 
S^{\lambda^+}_{<\lambda}\rangle$ such that:
\begin{itemize}
\item $\langle h_\beta:\,\beta<\lambda^+\rangle$ is a family of elements of ${}^{<\lambda^+}\lambda^+$,
\item for every $\delta\in S^{\lambda^+}_{<\lambda}$ we have $u_\delta\subseteq\delta$ and 
$\sup\{|u_\delta|^+:\,\delta\in S^{\lambda^+}_{<\lambda}\}<\lambda$,
\item for every $g\in {}^{\lambda^+}\lambda^+$, the set 
\[
\{\delta\in S^{\lambda^+}_{<\lambda}:\,\sup\{\alpha\in u_\delta:\,g\rest\alpha\in\{ h_\beta:\,\beta\in u_\delta\}\}=\delta\}
\]
is stationary.
\end{itemize}

We shall show that the assumptions of the theorem imply the above restatement of $\diamondsuit(\lambda^+)$.
This is the point of the proof where we use the assumption that $\lambda$ is uncountable, since otherwise limit ordinals $\delta<\lambda^+$ with $\cf(\delta)<\lambda$ would not exist and hence the whole of $S^{\lambda^+}_{<\lambda}$ could not form a stationary set. 

Let $\langle f_\beta:\,\beta<\lambda^+\rangle$ be a listing of ${}^{<\lambda^+}\lambda^+$, which exists by the cardinal arithmetic assumptions.
To each 
$g\in {{}^{\lambda^+}}\lambda^+$ we can associate a function $f_g\in {}^{\lambda^+}\lambda^+$ defined by
\[
f_g(\alpha)=\min\{\beta<\lambda^+:\,g\rest\alpha=f_\beta\}.
\]
We shall equally have a coding of sequences in ${}^\lambda\lambda^+$ (a set which again has size $\lambda^+$ by our assumptions) by elements of $\lambda^+$, called $c$, in the sense that:
\begin{itemize}
\item $c:\,{}^\lambda\lambda^+\to \lambda^+$ is a bijection, and
\item viewing ${}^\lambda\lambda^+$ as a set of sequences, for every element $\bar{s}$ of ${}^\lambda\lambda^+$ 
we have that $c(\bar{s})\ge \sup (\bar{s})$.
\end{itemize}
For every $\varepsilon <\lambda$ we define a function $\pi_\varepsilon:\,\lambda^+\to\lambda^+$, which to a given $\alpha\in \lambda^+$, associates the element $\bar{s}(\varepsilon)$, where $\bar{s}$ is the unique sequence such that $c(\bar{s})=\alpha$.
The functions $\pi_\varepsilon$ will now be combined with the functions  $\langle f_\beta:\,\beta<\lambda^+\rangle$, by defining for every relevant $\varepsilon$ and $\beta$ a function $h_{\varepsilon,\beta}$, with the same domain as $f_\beta$ and such that
\[
h_{\varepsilon,\beta}(\alpha)=\pi_\varepsilon(f_\beta(\alpha)).
\]
For each limit ordinal $\delta\in S^{\lambda^+}_{<\lambda}$,
so an ordinal of size $\le\lambda$, we fix a decomposition 
\[
\delta=\bigcup_{\varepsilon<\lambda} u^\delta_\varepsilon,
\]
where $\langle u^\delta_\varepsilon:\,\varepsilon<\lambda\rangle$ is an increasing continuous sequence of sets, each of size $<\lambda$. 

Now we claim that there exists $\varepsilon^\ast$ such that the sequences $\langle h_\beta=h_{\varepsilon^\ast,\beta}:\,
\beta <\lambda^+\rangle$ and $\langle u_\delta=u^\delta_{\varepsilon^\ast}:\,\delta\mbox{ limit }\in 
S^{\lambda^+}_{<\lambda}\rangle$ can be chosen to witness the version of $\diamondsuit(\lambda^+)$ in the sense of 
$(\ast)$. The proof is by contradiction. So, if the claim is not true, for every $\varepsilon<\lambda$ we can find a function
$g_\varepsilon \in {}^{\lambda^+}\lambda^+$ and a club $C_\varepsilon$ of $\lambda^+$, such that for every
$\delta$ limit in $S^{\lambda^+}_{<\lambda}\cap C_\varepsilon$,
\[
\sup\{\alpha\in u^\delta_\varepsilon:\,g_\varepsilon\rest\alpha\in\{ h_{\beta,\varepsilon}:\,\beta\in u^\delta_\varepsilon\}\}<\delta.
\]
For each $\alpha<\lambda^+$, the sequence $\langle g_\varepsilon(\alpha):\,\varepsilon<\lambda\rangle$
is in ${}^\lambda\lambda^+$, and hence we have defined $c(\langle g_\varepsilon(\alpha):\,\varepsilon<\lambda\rangle)$
as some ordinal $<\lambda^+$. We define a function $g:\,\lambda^+\to\lambda^+$ by
\[
g(\alpha)=c(\langle g_\varepsilon(\alpha):\,\varepsilon<\lambda\rangle).
\]
At the beginning of the proof, to this function we have associated the function $f_g:\lambda^+\to\lambda^+$. Clearly,
the set 
\[
C=\{\delta\mbox{ limit }\in S^{\lambda^+}_{<\lambda}:\,(\forall \alpha <\delta) f_g(\alpha)<\delta\}
\]
is a restriction of a club of $\lambda^+$ to $S^{\lambda^+}_{<\lambda}$. Let $E=\bigcap_{\varepsilon<\lambda} C_\varepsilon\cap C$, again a club of $\lambda^+$ restricted to $S^{\lambda^+}_{<\lambda}$.
Let $\delta^\ast\in E$. Since $\delta^\ast\in E$ we have that for every $\alpha<\delta^\ast$ also $f_g(\alpha)<\delta^\ast$.
Checking against the definition of $f_g$, this means that for all $\alpha<\delta^\ast$ there is $\beta<\delta^\ast$ satisfying 
$g\rest\alpha=f_\beta$. This $\beta$, as well as $\alpha$, belong to some $u^{\delta^\ast}_\varepsilon$, since these sets union up to
$\delta^\ast$. For each $\alpha<\delta^\ast$, choose the minimal such $\varepsilon$ and call it $\varepsilon^\ast_\alpha$.
Since $\cf(\delta^\ast)<\lambda$, there is $\varepsilon^\ast$ such that the set $B=\{\alpha<\delta^\ast:\,\varepsilon^\ast_\alpha<\varepsilon^\ast\}$ is unbounded in $\delta^\ast$. Since the sets $u^{\delta^\ast}_\varepsilon$
are increasing with $\varepsilon$, we have that for all $\alpha\in B$, the values $\alpha,f_g(\alpha)$ are in $u^{\delta_\ast}_{\varepsilon^\ast}$.

Then for every $\alpha\in B$ we have by the definition of $f_g$ that $g\rest \alpha=f_{f_g(\alpha)}$, which implies that for all
$\varepsilon <\lambda$ we have $g_\varepsilon\rest\alpha= h_{f_g(\alpha), \varepsilon}$, and this is in particular 
true for $\varepsilon^\ast$. However, we have chosen $\delta^\ast\in C_{\varepsilon^\ast}$ and we have a contradiction.
$\eop_{\ref{She10}}$
\end{proof}

\subsection{Strong Chains} One of the great successes of proper forcing is the study of the structure ${}^\omega\omega$ ordered by eventual domination. Many cardinal invariants of this structure have been studied and various consistent inequalities obtained, typically by making one of the invariant equal to $\aleph_1$ and the other equal to $\aleph_2$. This subject became known as the {\em set theory of the reals}. A detailed presentation of the techniques can be found in the book  
\cite{BartoszynskiJudah} by Tomek Bartoszy\'nski and Haim Judah. Lifting such results one step, such as to the structure
of ${}^{\omega_1}\omega_1$ modulo countable is now somewhat understood, through the work in generalised descriptive set theory, in this instance already started by Alain Mekler and Jouko V\"a\"an\"anen in \cite{MekVa}. A
challenge that one would hope to overcome by any axiom that would claim to be an analogue to PFA to $\aleph_2$ is to be able to study the structure of ${}^{\omega_1}\omega_1$ modulo {\em finite}, hence leaving a one cardinal gap between the size of the negligible sets in the order (finite) of the functions, and the size of the their domain ($\aleph_1$).
 
 A pioneering step in this programme was provided by Piotr Koszmider in \cite{Koszmider}, where he proved the following
 
\begin{theorem}[Koszmider 2000] \label{Koszmider} It is consistent that there is an $\omega_2$-sequence of functions in 
${}^{\omega_1}\omega_1$ which is strictly increasing modulo finite. 
\end{theorem}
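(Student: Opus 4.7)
The plan is to construct, by forcing over a ground model satisfying CH, a partial order $\mathbb P$ which generically adds the desired chain $\langle f_\alpha:\,\alpha<\omega_2\rangle$ while preserving both $\aleph_1$ and $\aleph_2$. The conditions of $\mathbb P$ will be finite approximations to this chain, augmented by a finite system of countable elementary submodels used as side conditions, in the spirit of Todor{\v c}evi\'c and Mitchell.

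Concretely, a typical condition $p$ has a working part and a side-condition part. The working part fixes a finite set $F_p\subseteq\omega_2$ together with a finite partial function $p(\alpha):\,\omega_1\to\omega_1$ for each $\alpha\in F_p$, subject to a coherence clause: for every pair $\alpha<\beta$ in $F_p$ a finite ``exceptional'' set $e_p(\alpha,\beta)\subseteq\omega_1$ is declared, and one demands $p(\alpha)(\xi)<p(\beta)(\xi)$ whenever $\xi\in\dom p(\alpha)\cap\dom p(\beta)\setminus e_p(\alpha,\beta)$. The side-condition part is a finite $\in$-chain $\mathcal N_p$ of countable elementary submodels of some sufficiently large $H(\theta)$, and the extension relation $q\le p$ requires $q$ to extend $p$ on the working part, to contain $\mathcal N_p$ among its side conditions, and to be compatible with the genericity demand at each $N\in\mathcal N_p$.

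The verification then breaks into three steps. First, one shows that $\mathbb P$ preserves $\aleph_1$: given $p$ and a countable $N\prec H(\theta)$ containing $p$, extend $p$ to $p'$ by adjoining $N$ to the side conditions and verify that $p'$ is $(N,\mathbb P)$-generic in the standard Todor{\v c}evi\'c--Mitchell sense. Second, one shows $\mathbb P$ preserves $\aleph_2$, by applying a $\Delta$-system lemma to the supports $F_p$ (here CH controls the number of possible roots) together with an amalgamation of compatible side conditions. Third, one recovers the generic functions and checks, by density, that for each $\alpha<\beta<\omega_2$ the sets $e_p(\alpha,\beta)$ stabilise to a genuine finite subset of $\omega_1$, so the generic functions form a strictly $<^\ast$-increasing $\omega_2$-chain.

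The main obstacle, and I expect the crux of the proof, is reconciling the requirement that only \emph{finitely many} exceptions are tolerated along each pair with the fact that any single-coordinate genericity argument decides $\aleph_1$ many values. A single tower of countable side conditions will not by itself carry enough information to control how the sets $e_p(\alpha,\beta)$ can grow under further extensions, so after a few steps of extending the working part one risks being forced to enlarge some $e_p(\alpha,\beta)$ beyond any predetermined bound. One therefore has to enrich the side-condition structure so that at every countable $N\in\mathcal N_p$ sufficient data about the permissible future growth of the exceptional sets is already reflected. This is the point where a Mitchell-style two-cardinal apparatus, or, as I expect Koszmider does, an additional combinatorial skeleton such as an $(\omega_2,1)$-morass or a carefully chosen $\square$-like sequence, must be introduced in order to synchronise the side conditions across levels and to amalgamate stages of cofinality $\omega_1$ below $\omega_2$ into a single dominating successor function.
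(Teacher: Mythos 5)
The paper does not actually supply a proof of Theorem~\ref{Koszmider}. It states the result with attribution and then records only that Koszmider's original argument ``used forcing with side conditions coming from a morass'' and that Veli{\v c}kovi{\'c} reproved it in \cite{VeVe} using Neeman's method of two types of side conditions. So there is no in-paper argument to match you against; I can only judge the internal soundness of what you wrote and how it aligns with the cited sources.

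Your diagnosis of the landscape is accurate and, in fact, matches what the paper reports: finite working parts approximating the $<^\ast$-increasing chain, a side-condition scaffold to get $(N,\mathbb P)$-genericity, and the recognition that a \emph{single} chain of countable elementary submodels is not enough to tame the exceptional sets $e_p(\alpha,\beta)$ over $\aleph_1$ many coordinates while also keeping $\aleph_2$ alive. You correctly point to morasses (Koszmider's route) or a Mitchell/Neeman-style two-cardinal side-condition apparatus (Veli{\v c}kovi{\'c}'s route) as the missing ingredient. That said, the proposal is not a proof: the ``three steps'' you outline in the middle are exactly the steps that \emph{fail} for the naive poset, and your own final paragraph concedes this. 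In particular, the $\aleph_2$-preservation argument via a $\Delta$-system on the $F_p$'s cannot work as written, because a forcing with arbitrary countable elementary submodels as side conditions is not $\aleph_2$-cc --- this is precisely why both known proofs replace the bare $\Delta$-system by structural amalgamation (morass coherence, or transitive models of size $\aleph_1$ interleaved with the countable ones). The genuine gap is therefore not hidden: you have identified the crux but left it unresolved. To turn this into a proof you would need either to set up the simplified $(\omega_1,1)$-morass and show how conditions live along its tree of maps so that amalgamation at cofinality-$\omega_1$ points is automatic, or to add a second type of side condition (transitive $\in$-submodels of size $\aleph_1$) \`a la Neeman and prove the strong genericity lemma for both types; neither of these is sketched here.
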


Koszmider used forcing with side conditions coming from a morass. His result was reproved by Veli{\v c}kovi{\'c} in \cite{VeVe} as one of the first applications of Neeman's method of forcing with two types of side conditions. However, if one wants to lift this result one cardinal up, so increasing the gap to two cardinals, it is not possible; Shelah \cite{Shelah-flat} proved the following theorem:

\begin{theorem}[Shelah 2010]\label{anotherShe10} There is no sequence of length $\omega_3$ of functions in ${}^{\omega_2}\omega_2$ which is strictly increasing modulo finite.
 \footnote{In unpublished work Tanmay Inamdar showed a similar result regarding the increasing sequence of sets: there
 is no sequence of length $\omega_3$ of sets in $[\omega_2]^{\omega_2}$ which are strictly $\subseteq$-increasing modulo finite. This seems to be a genuinely different statement than that of Theorem \ref{anotherShe10}.}
  \end{theorem}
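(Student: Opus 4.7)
I would begin by assuming, for contradiction, that $\langle f_\alpha:\,\alpha<\omega_3\rangle$ is strictly $<^*$-increasing in ${}^{\omega_2}\omega_2$, with finite exceptional sets $F_{\alpha,\beta}\subseteq\omega_2$ witnessing $f_\alpha<^*f_\beta$ for each $\alpha<\beta<\omega_3$. My plan is a column-wise pigeonhole combined with a recursive case analysis. The key initial observation is that for every $\xi<\omega_2$ the map $\alpha\mapsto f_\alpha(\xi)$ sends $\omega_3$ into $\omega_2$, so by regularity of $\omega_3$ some value $c_\xi<\omega_2$ is realised on a set $A_\xi\subseteq\omega_3$ of cardinality $\omega_3$; moreover, any two distinct elements of $A_\xi$ must contain $\xi$ in their exceptional set, because equality of values at $\xi$ contradicts strict $<^*$ at that coordinate.

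Given a current $\omega_3$-sized index set $A\subseteq\omega_3$, I would set up a dichotomy: either (B) for every $\xi$ there is a unique value $c^A_\xi$ whose preimage in $A$ has size $\omega_3$, the preimages of all other values contributing together at most $\omega_2$; or (A) some coordinate $\xi^*$ admits at least two distinct values, each with $\omega_3$-many preimages in $A$. In Case~B, the complement in $A$ of $\bigcap_{\xi<\omega_2}\{\alpha\in A:\,f_\alpha(\xi)=c^A_\xi\}$ is a union of $\omega_2$ sets of size $\leq\omega_2$, hence of size $\leq\omega_2<\omega_3$, so the simultaneous level set has size $\omega_3$ and any two of its elements $\alpha\neq\beta$ satisfy $f_\alpha=f_\beta$, contradicting strictness. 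In Case~A I would restrict $A$ to one of the $\omega_3$-sized preimages at $\xi^*$ and iterate, noting that the dichotomy has to be examined afresh for the refined chain.

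This iteration either terminates in Case~B---which furnishes the contradiction---or produces an $\omega$-sequence of automatically distinct coordinates $\xi_0,\xi_1,\ldots<\omega_2$ (distinctness is automatic since once the chain has been made constant at $\xi_0,\ldots,\xi_{n-1}$ those coordinates can no longer be Case-A candidates) together with a nested sequence $A^0\supseteq A^1\supseteq\cdots$ of $\omega_3$-sized subsets of $\omega_3$ such that $f_\alpha(\xi_n)=v_n$ for all $\alpha\in A^{n+1}$. Any two distinct $\alpha^*<\beta^*$ in $\bigcap_n A^n$ would then give $\{\xi_n:\,n<\omega\}\subseteq F_{\alpha^*,\beta^*}$, placing an infinite set inside a finite one and finishing the proof.

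The main obstacle, and the step I expect to demand the most delicate work, is precisely the last one: ensuring $|\bigcap_n A^n|\geq 2$. A decreasing $\omega$-chain of cofinal subsets of $\omega_3$ can in general have empty intersection, and the Case-A refinements are not free to preserve a pre-chosen pair of candidates—two distinct $\alpha^*,\beta^*$ already agree on only finitely many $\xi$, so any pair committed to in advance can survive only finitely many refinement steps. My approach to closing the gap would be to recast the iteration as a branching tree rather than a line, exploiting the $\aleph_2$-fold branching available in Case~A, and to supplement this with elementary-submodel reflection at cofinality-$\omega_2$ ordinals of $\omega_3$ together with a $\Delta$-system analysis of the collection $\{F_{\alpha,\beta}\}$ along such ordinals. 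This is precisely where I expect genuine pcf-theoretic input of the kind Shelah brings to bear, rather than pure pigeonhole, to be unavoidable.
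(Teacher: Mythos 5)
Your argument has a genuine and, I believe, unfixable gap in its current structure, and you yourself identify it: you need $\bigcap_n A^n$ to contain two distinct indices. But look again at why you want two such indices $\alpha^*<\beta^*$: precisely because they would agree at all the infinitely many coordinates $\xi_0,\xi_1,\dots$, forcing $\{\xi_n:n<\omega\}\subseteq F_{\alpha^*,\beta^*}$. That is not a bonus you earn from the construction; it is a property that the strict $<^*$-increase rules out outright. So for \emph{any} nested sequence of refinements of the kind you describe, and regardless of how cleverly the refinements are chosen, $\bigcap_n A^n$ has at most one element, because two elements in the kernel would already exhibit the forbidden infinite agreement. In other words, the desired contradiction and the desired non-triviality of the kernel are literally the same assertion, so the linear pigeonhole scheme is self-defeating rather than merely incomplete. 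The branching-tree variant does not obviously escape this: each branch through the tree again has a kernel of size at most one, the branches have disjoint kernels, and whether those singletons can cover $\omega_3$ depends on the branching arithmetic (roughly $\aleph_2^{\aleph_0}$), so you would be smuggling cardinal-arithmetic hypotheses into what must be a pure ZFC theorem. The appeal to elementary submodels and $\Delta$-systems is plausible in spirit but is not yet an argument.

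Shelah's actual proof takes a different route and is worth internalising, because it shows exactly where the extra leverage comes from. One fixes a cofinal family $\FF\subseteq[\omega_2]^{\aleph_0}$ of size $\aleph_2$ and, for each $F\in\FF$, projects each $f_\alpha$ to a function $f^F_\alpha$ with values in the countable set $F\cup\{\infty\}$, then restricts to countable intervals $[\nu,\nu+\omega)$. For each of the $\aleph_2$ pairs $(F,\nu)$ one chooses a club of $\omega_3$ on which the restricted sequence is either strictly increasing mod finite or eventually constant mod finite, and intersects all of them into a single club $C$ of $\omega_3$. Picking an $\omega_1$-sequence $\langle\alpha_i:i<\omega_1\rangle$ from $C$, the union of the finite exceptional sets $u_{i,j}$ has size $\aleph_1$ and is therefore bounded by some $\nu<\omega_2$; above $\nu$ the original values strictly increase along $\omega_1$, so picking $F$ to cover the countably many relevant values forces the projected sequence on $[\nu,\nu+\omega)$ to be strictly increasing mod finite as well. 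But then each consecutive pair witnesses a strict jump at some point $o_i$ over some $\zeta_i\in F$, and since $[\nu,\nu+\omega)\times F$ is countable while $i$ ranges over $\omega_1$, two of these witnesses coincide, yielding $f_{\alpha_j}(o^*)\le\zeta^*<f_{\alpha_{j+1}}(o^*)\le f_{\alpha_k}(o^*)\le\zeta^*$. The crucial structural features absent from your argument are the projection to countable approximants (which turns an $\omega_2$-valued problem into a countable pigeonhole), the use of clubs of $\omega_3$ to stabilise behaviour uniformly over $\aleph_2$ parameters, and the choice of an $\omega_1$-block of indices so that the union of exceptional sets is still bounded in $\omega_2$. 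Your coordinate-wise pigeonhole captures the flavour of ``agreements are finite'' but never exploits the $\aleph_0$ / $\aleph_1$ / $\aleph_2$ / $\aleph_3$ cascade that makes the actual proof close.
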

  
 \begin{proof} Suppose for a contradiction that we have a sequence $\langle f_\alpha:\, \alpha <\omega_3\rangle$ of functions in ${}^{\omega_2}\omega_2$ which is strictly increasing modulo finite.
 
Let us observe that there is a cofinal subset $\mathcal F$ of $[\omega_2]^{\aleph_0}$ of size $\aleph_2$. Namely for every $\mu\in[\omega_1,\omega_2)$, we can fix a bijection $e_\mu:\,\omega_1\to \mu$. Let 
\[
\FF=\{e_\mu\,'' i :\, i<\omega_1, \mu<\omega_2\}.
\]
If $A\in [\omega_2]^{\aleph_0}$, there is $\mu<\omega_2$ such that $A\subseteq \mu$. Moreover, since $A$ is countable,
there is  $i<\omega_1$ such that $A\subseteq e_\mu\,'' i$.
Now for every $F\in\FF$ and $\alpha<\omega_3$ we define $f^F_\alpha:\omega_2\to\omega_2\cup\{\infty\}$ by letting
\[
f^F_\alpha(\mu)=
\begin{cases}
\min(F\setminus f_\alpha(i)) &\mbox{ if defined}\\
\infty&\mbox{ otherwise}.\\
\end{cases}
\]
For every $\nu<\omega_2$ we let $f^{F,\nu}_\alpha=f^F_\alpha\rest [\nu, \nu + \omega)$.

Fixing $F\in \mathcal F$ and $\nu < \omega_2$, we have obtained a sequence 
$\langle f^{F,\nu}_\alpha:\,\alpha<\omega_3\rangle$ of functions from $[\nu, \nu + \omega)$ to $F\cup\{\infty\}$ which is 
increasing modulo finite. It is to be expected that there will be some repetition in this sequence. In anticipation, let us define
for every $F\in\FF$ the set $B_F$ to be the set of all $\nu<\omega_2$ such that $\langle f^{F,\nu}_\alpha:\,\alpha<\omega_3\rangle$ is not eventually constant modulo finite. For every $F\in\FF$ and $\nu<\omega_2$ we can choose a club $C^F_\nu$ of $\omega_3$ such that:
\begin{itemize}
\item if $\nu\in B_F$, then for every $\beta<\gamma$ both in $C^F_\nu$ we have that $f^{F,\nu}_\beta$ is strictly below 
$f^{F,\nu}_\gamma$ modulo finite,
\item if $\nu\notin B_F$, then $\langle f^{F,\nu}_\alpha:\,\min(C^F_\nu)\le\alpha<\omega_3\rangle$ is constant modulo finite.
\end{itemize}
Since $\mathcal F$ is of size $\aleph_2$, the above collection of clubs of $\omega_3$ has size $\aleph_2$ and hence
\[
C\deq \bigcap\{ C^F_\nu:\,F\in \mathcal F, \nu<\omega_2\}
\]
is a club of $\omega_3$. Let $\langle \alpha_i:\;i<\omega_1\rangle$ be any increasing sequence of elements of $C$.
Therefore for any $i<j<\omega_1$, the set
\[
u_{i,j}\deq\{\nu<\omega_2:\,f_{\alpha_i}(\nu)\ge f_{\alpha_j}(\nu)\}
\]
is finite. Therefore $\bigcup_{i,j<\omega_1} u_{i,j}$ is a subset of $\omega_2$ of size $\aleph_1$ and, hence, it is contained in $\nu$ for some $\nu<\omega_2$. It follows that the interval $[\nu, \nu+\omega)$ is disjoint from 
$\bigcup_{i,j<\omega_1} u_{i,j}$. This means that for every $o\in [\nu, \nu+\omega)$, the sequence
\[
\langle f_{\alpha_i}(o):i<\omega_1\rangle
\]
is strictly increasing. By the choice of $\FF$, there is $F\in \FF$ such that 
\[
\{f_{\alpha_0}(o), f_{\alpha_1}(o):\, o\in[\nu, \nu+\omega) \}\subseteq F.
\]
Hence for every $o\in [\nu, \nu+\omega)$ we have
\[
f_{\alpha_0}(o)< f^F_{\alpha_0}(o)=f^{F,\nu}_{\alpha_0}(o)\le f_{\alpha_1}(o)< f^F_{\alpha_1}(o)= f^{F,\nu}_{\alpha_1}(o).
\]
In particular, $f^{F,\nu}_{\alpha_0}(o)< f^{F,\nu}_{\alpha_1}(o)$. This shows that $f^{F,\nu}_{\alpha_0}$ and 
$f^{F,\nu}_{\alpha_1}$ are not
equal modulo finite. By the choice of $C$, it follows that $\nu\in B_F$ and hence the sequence
$\langle f^{F,\nu}_{\alpha_i}:\,i<\omega_1\rangle$ is strictly increasing modulo finite. In particular, for every 
$i<\omega_1$ there is $o_i \in [\nu, \nu+\omega) $ such that $f^{F,\nu}_{\alpha_i}(o_i)< f^{F,\nu}_{\alpha_{i+1}}(o_i)$, which implies that there is $\zeta_i\in F$ such that $f^{F,\nu}_{\alpha_i}(o_i)\le \zeta_i < f^{F,\nu}_{\alpha_{i+1}}(o_i)$.
In particular, 
\[
f_{\alpha_i}(o_i)\le \zeta_i<f_{\alpha_{i+1}}(o_i).
\]
Both the interval $[\nu, \nu+\omega) $ and the set $F$ are countable, so there must be $j<k<\omega_1$ and $o^\ast, \zeta^\ast$ such that
\[
o_j=o_k=o^\ast\mbox{ and } \zeta_j=\zeta_k=\zeta^\ast.
\]
Since $\langle f_{\alpha_i}(o^\ast):\, i<\omega_1\rangle$
is strictly increasing, we have 
\[
f_{\alpha_j}(o^\ast)\le \zeta^\ast< f_{\alpha_{j+1}}(o^\ast)\le f_{\alpha_k}(o^\ast)\le\zeta^\ast,
\]
which is a contradiction.
$\eop_{\ref{anotherShe10}}$
\end{proof}

\subsection{Club guessing}
Another combinatorial principle that distinguishes
$\omega_1$ and $\omega_2$ is the {\em club guessing}, a ZFC principle\footnote{By the expression `ZFC principle' we mean a combinatorial statement true in ZFC.}
that was discovered by Shelah in his development of pcf theory, see \cite{Sh_CA}. The statement of club guessing is
similar to $\diamondsuit$, but only club sets are guessed. Many versions of this principle are to be found in \cite{Sh_CA}, we give the version we find the simplest.  We use the notation $S^2_0$ for $\{\delta< \omega_2: \, \cf(\delta)={\aleph_0}\}$.
The following is an auxiliary definition.

\begin{definition}\label{candidates} A {\em candidate} is a sequence 
\[
\langle C_\delta:\,\delta\in S\rangle
\]
where $S$ is stationary subset of $S^{2}_{0}$ and
each $C_\delta$ is an unbounded subset of the corresponding $\delta$ with $\otp{C_\delta}=\omega$.
\end{definition}

\begin{theorem}[Shelah]\label{clubguessing}  
There is a candidate which {\em guesses clubs of $\omega_2$}, that is such that for every club $C$ of $\omega_2$, satisfies
\[
\{\delta:\, C_\delta\subseteq C\} \mbox{ is stationary}.
\]
\end{theorem}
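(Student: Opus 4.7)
The plan is to argue by contradiction, using a transfinite construction of length $\omega_1$. The key structural fact is that $\cf(\omega_2)=\omega_2>\omega_1$, so that the intersection of $\omega_1$ many clubs of $\omega_2$ is again a club, and any $\omega_1$-sequence of ordinals below $\omega_2$ is bounded.

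I fix an arbitrary starting candidate $\bar C^0=\langle C^0_\delta:\,\delta\in S^2_0\rangle$. Assume for contradiction that no candidate on $S^2_0$ guesses clubs of $\omega_2$. By induction on $i<\omega_1$, I construct a decreasing sequence of clubs $E_0\supseteq E_1\supseteq\cdots$ of $\omega_2$ (making each $E_i$ a counterexample against $\bar C^i$, after intersecting with the earlier ones), together with strictly increasing ordinals $\gamma_i<\omega_2$, stationary sets $T_i\subseteq S^2_0$, and candidates $\bar C^i$. At stage $i+1$, I apply Fodor's lemma to the regressive function $\delta\mapsto\sup(C^i_\delta\setminus E_i)$, which is defined on the stationary set where $C^i_\delta\not\subseteq E_i$; this yields $\gamma_i$ and $T_i$. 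Then I set $C^{i+1}_\delta=C^i_\delta\cap(\gamma_i,\delta)$ for $\delta\in T_i$ with $\delta>\gamma_i$, which is a cofinal $\omega$-tail of $C^i_\delta$ lying inside $E_i$ (any tail of a cofinal $\omega$-sequence in $\delta$ is itself cofinal of order type $\omega$, since $C^i_\delta$ has only finitely many elements below the fixed bound $\gamma_i<\delta$); for other $\delta$ I keep $C^{i+1}_\delta=C^i_\delta$.

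After $\omega_1$ stages, set $E=\bigcap_{i<\omega_1}E_i$, a club of $\omega_2$, and $\gamma^\ast=\sup_{i<\omega_1}\gamma_i<\omega_2$. The aim is to identify a stationary $S^\ast\subseteq S^2_0$ above $\gamma^\ast$ on which the tail $C^\ast_\delta\deq C^0_\delta\cap(\gamma^\ast,\delta)$ is a cofinal $\omega$-sequence contained in every $E_i$, and hence in $E$. The tail-candidate $\bar C^\ast=\langle C^\ast_\delta:\delta\in S^\ast\rangle$ then guesses $E$; a final argument, replacing any club $C$ by $C\cap E$, shows that $\bar C^\ast$ in fact guesses all clubs, contradicting the hypothesis.

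The main obstacle is to secure the stationary set $S^\ast$ surviving all $\omega_1$ applications of Fodor's lemma. An $\omega_1$-intersection of stationary subsets of $\omega_2$ need not be stationary (e.g., $S^2_0\cap S^2_1=\emptyset$ though both sets are stationary), so naive intersection of the $T_i$'s fails. One must coordinate the pressing-down arguments carefully across the $\omega_1$ stages --- for instance, via a fusion-style bookkeeping in which $T_{i+1}$ is chosen inside $T_i$ subject to extra constraints that a normal filter on $\omega_2$ can absorb, or by a reflection argument using that $\omega_2$ carries $\omega_1$-complete filters on stationary sets. This coordination, together with verifying that $\gamma_i>\gamma_{i-1}$ is automatic (since the sup of $C^i_\delta\setminus E_i$ must exceed the previous truncation bound), is the technical heart of Shelah's argument.
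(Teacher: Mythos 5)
Your plan captures the right global skeleton --- assume for contradiction that no candidate guesses, build a decreasing chain of clubs of $\omega_2$ over $\omega_1$ stages, exploit that $\omega_1 < \cf(\omega_2)$ --- but the implementation diverges from the paper's proof at exactly the two places where your version does not close, and where you yourself flag the gap.

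First, the Fodor step does not get off the ground. To press down the map $\delta \mapsto \sup(C^i_\delta \setminus E_i)$ you need it to be regressive, i.e.\ you need $C^i_\delta \cap E_i$ to be cofinal in $\delta$. There is no reason for this: $C^i_\delta$ is just \emph{some} cofinal $\omega$-sequence in $\delta$ and the club $E_i$ may miss all but a finite initial segment of it, in which case $\sup(C^i_\delta \setminus E_i)=\delta$. The paper's proof avoids this entirely by replacing truncation with \emph{projection}:
\[
C^{\zeta+1}_\delta = \bigset{\sup(\alpha \cap E_{\zeta+1}) \, : \, \alpha \in C^\zeta_\delta \setminus \min(E_{\zeta+1})}.
\]
For any $\delta \in \lim(E_{\zeta+1})$ this is automatically a cofinal $\omega$-type subset of $E_{\zeta+1}\cap\delta$, independently of how $C^\zeta_\delta$ meets $E_{\zeta+1}$, so there is no regressive function, no Fodor, and no stationary set to lose.

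Second, the coordination issue you identify as ``the technical heart'' is, in the actual proof, a non-issue once the construction is set up this way. The stationary set at stage $\zeta$ is simply $S_\zeta = S^2_0 \cap \lim(E_\zeta)$ (intersected with the previous one at successors); at limit $\zeta<\omega_2$ one has $E_\zeta=\bigcap_{\xi<\zeta}E_\xi$, a club, so $S_\zeta$ is stationary because it is a club intersected with the fixed stationary set $S^2_0$ --- not an $\omega_1$-fold intersection of unrelated stationary sets. No normal-filter or fusion bookkeeping is needed. Moreover the contradiction is \emph{pointwise}: one fixes a single $\delta^\ast\in S_{\omega_1}$, observes that for each $\alpha<\delta^\ast$ the non-increasing sequence $\zeta\mapsto\sup(\alpha\cap E_\zeta)$ is eventually constant, takes the supremum $\zeta^\ast$ of the stabilisation stages over $\alpha<\delta^\ast$, and concludes that $C^{\zeta^\ast+2}_{\delta^\ast}=C^{\zeta^\ast+1}_{\delta^\ast}\subseteq E_{\zeta^\ast+1}$, contradicting that stage $\zeta^\ast+2$ was supposed to change the candidate at $\delta^\ast$. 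So you do not need a stationary set ``surviving'' all $\omega_1$ refinements, only one $\delta^\ast$ in the final $S_{\omega_1}$; and your final ``a final argument replacing $C$ by $C\cap E$'' step is likewise not how the contradiction is obtained. In short, your sketch has the right intuition about why $\omega_1$ steps should suffice, but the missing idea is the projection construction, which is what makes the whole thing go through without Fodor and without the intersection problem you correctly observed would otherwise be fatal.
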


\begin{proof} We shall perform a construction by induction on $\zeta<\omega_2$.

At the stage 0, let $\langle C_\delta^0:\,\delta\in S^2_0\rangle$ be any candidate. If it guesses clubs, we are
done. Otherwise, there is a club $C^0$ of $\omega_2$ such that $\{\delta:\, C_\delta^0\subseteq C^0\}$ is non-stationary,
and therefore, there is a club $D^0$ of $\omega_2$ such that 
\[
\delta\in D^{0}\cap S^2_0 \implies C^0_\delta\setminus C^0\neq \emptyset.
\]
Let $E_0=\lim(C^0\cap D^0)$, so still a club of $\omega_2$. Let $S_0=\lim(E_0)\cap S^2_0$.

Given a club $E_\zeta$ of $\omega_2$ and a candidate  $\langle C_\delta^\zeta:\,\delta\in S_\zeta\rangle$ which 
does not guess clubs of $\omega_2$, we replace $\langle C_\delta^\zeta:\,\delta\in S_\zeta\rangle$
by another candidate, 
\[
\langle C_\delta^{\zeta+1}:\,\delta\in S_{\zeta+1}\rangle
\]
as follows. We first find a club $C^{\zeta+1}$
of $\omega_2$ which exemplifies that $\langle C_\delta^\zeta:\,\delta\in S_\zeta\rangle$ does not guess clubs, so such that there is a club $D^{{\zeta+1}}$ with 
\[
\delta\in D^{\zeta+1} \cap S_\zeta  \implies C^\zeta_\delta\setminus C^{\zeta+1} \neq \emptyset.
\]
Let $E_{\zeta+1}=\lim(C^{\zeta+1}\cap D^{\zeta+1}\cap E_\zeta)$, so still a club of $\omega_2$. Let 
$S_{\zeta+1}=\lim(E_{\zeta+1})\cap S_\zeta$, so still a stationary subset of $S^2_0$. For $\delta\in  S_{\zeta+1}$, we let
\[
C_\delta^{\zeta+1}=\{ \sup(\alpha\cap E_{\zeta+1}):\,\alpha\in C_\delta^\zeta \setminus \min (E_{\zeta+1})\}.
\]
Notice that for $\delta\in  S_{\zeta+1}$, we have that $\delta\in \lim (E_{\zeta+1})$, so $\sup(C_\delta^{\zeta+1})=\delta$, and also note that $C_\delta^{\zeta+1}\subseteq E_{\zeta+1}$ and $E_{\zeta+1}\subseteq E_{\zeta}$. 

For $\zeta<\omega_2$ limit ordinal let $E_\zeta=\bigcap_{\xi<\zeta} E_\xi$, still a club of
$\omega_2$. Let $S_\zeta=S^2_0\cap \lim(E_\zeta)$. In particular, for every $\xi <\zeta$ we have that $E_\zeta\subseteq E_\xi$ and observe that also $S_\zeta\subseteq S_\xi$. Let $\langle C_\delta^{\zeta}:\,\delta\in S_{\zeta}\rangle$ be any candidate. If it guesses clubs, we are done. Otherwise, we can continue the induction.
 
Assuming the induction has gone to the end, let us take any $\delta^\ast\in S_{\omega_1}$, hence $\delta^\ast\in\lim(E_{\omega_1})\cap S^2_0$
and in particular $\delta^\ast$ is a limit ordinal larger than $\min(E_{\omega_1})$. Notice that for any 
$\alpha>\min(E_{\omega_1})$, we have that $\langle  \sup(\alpha\cap E_{\zeta}):\,\zeta<\omega_1\rangle$ is a non-increasing sequence of ordinals, so it must be
eventually constant. For each such $\alpha$ let $\zeta_\alpha<\omega_1$ be an ordinal after which 
 $\langle  \sup(\alpha\cap E_{\zeta}):\,\zeta<\omega_1\rangle$ is constant. Let 
  \[
 \zeta^\ast=\sup_{\alpha\in {\delta^\ast}
 \setminus \min(E_{\omega_1})}\zeta_\alpha.
 \]
 Then we have that $\zeta^\ast<\omega_2$ and
 $C_{\delta^\ast}^{\zeta^\ast+2}=C_{\delta^\ast}^{\zeta^\ast+1} \subseteq E_{\zeta^\ast+1}$, which is in a contradiction with the choice of 
 $\langle C_\delta^{\zeta^\ast+2}:\,\delta\in S_{\zeta^\ast+2}\rangle$. This contradiction shows that the induction must have stopped 
 at some moment, so we must have found a sequence that guesses clubs.
$\eop_{\ref{clubguessing}}$
\end{proof}

The situation at $\omega_1$ is different. Namely, it is consistent that there is no club guessing between $\omega$ and
$\omega_1$. In particular PFA implies that there is no such guessing. This was proved by Shelah, see [XVII 3.14.A ,
\cite{Sh_P}].

\begin{theorem}[Shelah]\label{PGAdoesnotguess} PFA implies that there is no sequence 
$\langle C_\delta:\,\delta\in S\rangle$ for some $S$ stationary subset of the set of countable limit ordinals such that 
each $C_\delta$ is an unbounded subset of the corresponding $\delta$ with $\otp{C_\delta}=\omega$, and such that
for every club $C$ of $\omega_1$,
\[
\{\delta:\, C_\delta\subseteq C\} \mbox{ is stationary}.
\]
\end{theorem}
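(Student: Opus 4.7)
The plan is to derive a contradiction by using PFA to shoot a club $C$ through $\omega_1$ which destroys the putative guessing property. The forcing $\mathbb P$ consists of closed bounded countable subsets $p$ of $\omega_1$ such that $C_\eta\not\subseteq p$ for every $\eta\in\lim(p)\cap S$, ordered by end-extension, with $q$ stronger than $p$ meaning $q\supseteq p$ and $p=q\cap(\max p+1)$. Under PFA, the union $C$ of a filter meeting the right dense sets will be a club of $\omega_1$ falsifying the guessing property: each $\delta\in S\cap\lim(C)$ will turn out to be a limit point of some condition $p\in G$, and then by definition of $\mathbb P$ there is $\gamma\in C_\delta\setminus p$; since $\gamma<\delta\le\max p$, every end-extension of $p$ avoids $\gamma$, hence $\gamma\notin C$. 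Thus $\{\delta\in S:\,C_\delta\subseteq C\}$ is disjoint from the club $\lim(C)$, contradicting club guessing.

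The substantial step is verifying that $\mathbb P$ is proper. Fix a countable elementary submodel $M\prec H(\theta)$ with $\mathbb P,\bar C\in M$, set $\delta=M\cap\omega_1$, and take $p\in M\cap\mathbb P$. Enumerate the dense open subsets of $\mathbb P$ in $M$ as $(D_n)_{n<\omega}$. If $\delta\notin S$, the standard construction produces a sequence $p_0,p_1,p_2,\dots$ in $M\cap\mathbb P$ with $p_0$ extending $p$, each $p_{n+1}$ extending $p_n$, $p_n\in D_n$, and $\sup_n\max p_n=\delta$; then $q=\bigcup_n p_n\cup\{\delta\}$ is an $(M,\mathbb P)$-generic master condition, since its only new limit point $\delta$ is not in $S$ and so imposes no further constraint. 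The delicate case is $\delta\in S$, where the master condition must also witness $C_\delta\not\subseteq q$. Here I first pick any $\gamma^\ast\in C_\delta$ with $\gamma^\ast>\max p$; since $M\cap\omega_1=\delta$ is transitive, every ordinal below $\delta$ lies in $M$, so $\gamma^\ast\in M$. Choosing $\beta\in M\cap(\gamma^\ast,\delta)$ and replacing $p$ by $p'=p\cup\{\beta\}\in M\cap\mathbb P$ yields $\max p'>\gamma^\ast$ with $\gamma^\ast\notin p'$, and thanks to end-extension every further condition built above $p'$ automatically omits $\gamma^\ast$. Running the same bookkeeping starting from $p'$ gives $q$ with $\gamma^\ast\in C_\delta\setminus q$, so $q\in\mathbb P$.

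With properness in hand the conclusion is routine. Applying PFA to the $\aleph_1$ dense sets $E_\alpha=\{p\in\mathbb P:\,\max p>\alpha\}$ for $\alpha<\omega_1$ yields a filter $G$ in $V$ with $C=\bigcup G$ unbounded in $\omega_1$. Since $G$ is linearly ordered by end-extension, each $p\in G$ satisfies $C\cap(\max p+1)=p$, so $C$ is closed, hence a club. For any $\delta\in S\cap\lim(C)$, picking $p\in G$ with $\max p>\delta$ gives $\delta\in\lim(p)$, and the defining constraint of $\mathbb P$ delivers $\gamma\in C_\delta\setminus p$; since $\gamma<\delta\le\max p$, end-extension guarantees $\gamma\notin C$. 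Hence $C_\delta\not\subseteq C$, contradicting the hypothesis on $\langle C_\delta:\,\delta\in S\rangle$.

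The main obstacle is the properness argument in the case $\delta\in S$; the key observation, distinguishing the situation at $\omega_1$ from the ZFC phenomenon at $\omega_2$ of Theorem \ref{clubguessing}, is that $C_\delta\subseteq\delta\subseteq M$, which makes a ``forbidden'' point $\gamma^\ast\in C_\delta$ available inside $M$, and the end-extension ordering then preserves the forbiddenness automatically once the construction has been pushed past $\gamma^\ast$.
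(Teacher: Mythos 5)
Your proof is correct. The paper does not actually supply a proof of Theorem~\ref{PGAdoesnotguess} (it cites Shelah's book for it), but your argument is the standard one: force with countable closed conditions that are required to avoid $C_\eta$ at every limit point $\eta\in S$, and verify properness by exploiting that $C_\delta\subseteq\delta=M\cap\omega_1\subseteq M$, so a ``forbidden'' point $\gamma^\ast\in C_\delta$ can be reserved inside $M$ before the fusion over the $D_n$ begins; the end-extension ordering then propagates the exclusion automatically. All the small verifications you glide over check out (new limit points of the union $q$ below $\delta$ are already limit points of some $p_n$, adding the single successor $\beta>\max p$ creates no new limit points, $p\in M$ countable implies $p\subseteq M$ so $\max p<\delta$, and $\lim(C)$ is the club disjoint from the guessed set).
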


Putting the two above theorems together, we see that whatever a generalisation of PFA we may decide to adopt at 
$\omega_2$, it will
not be able to deal with the same kind of forcing as the one we are used to having at $\omega_1$. 

\subsection{Trees, a quick overview}
Further combinatorial differences exist between $\aleph_1$ and $\aleph_2$, perhaps the most well known one is that on 
$\omega_1$ we have special Aronszajn trees, as reported in \cite{Kurepathesis}, while, ever since the celebrated work of Mitchell \cite{Mitchellmodel}, there are consistency results showing that the analogue is not true on
$\omega_2$. Indeed, the whole large chapter of combinatorics of $\omega_1$ is built on fine understanding of Aronszajn and Souslin trees, starting from  {\DJ}uro Kurepa's work in \cite{Kurepathesis}. It would even be preposterous to include concrete references to justify this statement, since there are so many that quoting a few would only have the effect of excluding many others, but let us mention some favourites such as Todor{\v c}evi{\'c}'s \cite{Stevoontrees}
and Justin Moore's \cite{JustinstructureA}.

On $\omega_2$ this is a completely different matter. For example, the celebrated paper of Richard Laver and Shelah \cite{LaverShelah} proves it consistent from a weakly compact cardinal that there are no $\aleph_2$-Souslin trees and CH holds, but to this day the question that they have asked of the consistency of their result with GCH, is still open.
A recent paper by Chris Lambie-Hanson and Assaf Rinot \cite{Lambie-Henson-Rinot} indicates that this might not even be possible, since it proves that if $\lambda^{++}$ is not a Mahlo cardinal in $\mathbf L$, then $2^\lambda=\lambda^+$ implies the existence of a 
$\lambda^{++}$-Souslin tree. In fact, Lambie-Hanson and Rinot develop an interesting forcing axiom on their way to this result. Their forcing axiom gives a framework for constructing objects of size $\lambda^+$ from $\diamondsuit(\lambda)$
and a weakening of $\square_\lambda$.

Tree combinatorics is a very rich area which we obviously cannot cover in detail. A survey of some of the main results in the context of forcing axioms can be found in \cite{fasttrack}. We feel that a fine understanding of combinatorics of trees on $\aleph_2$ 
is now within our reach thanks to the developments both on forcing with two types of side conditions and related methods, and to the axioms such as the one by Lambie-Henson and Rinot. 
To mention one work in progress, let us discuss an interesting concept that makes as much sense at $\aleph_2$ as it does on $\aleph_1$, which is the idea of a wide Aronszajn tree. 

A {\em wide Aronszajn tree} is a tree of height and size $\aleph_1$ but with no uncountable branches. Motivated by their use of such trees in abstract model theory, Mekler and V\"a\"an\"anen asked in their 1994 paper \cite{MekVa} if under 
MA$(\omega_1)$ there is one such tree into which all the others embed under strict order preserving embeddings. This question was resolved in our recent paper with Shelah \cite{wideAronszajn}, where we showed that MA$(\omega_1)$ implies that there is no such tree. The concept of a wide $\aleph_2$-Aronszajn tree is very natural: a tree of height and size 
$\aleph_2$ with no branches of length $\omega_2$, and it is easily seen that such trees exist just in ZFC. To study their universality under strict order preserving embeddings it no longer suffices to use MA, but consistency results using the method of two type of models as side conditions are studied in our work in progress \cite{Mirnaonthetree}.

To wrap this up, let us mention a related concept that saw one of the first applications of Neeman's method, given by Neeman himself in
\cite{Neemanapplications}. It is a somewhat technical result about {\em weakly special $\aleph_2$-Aronszajn trees}. An 
$\aleph_2$-Aronszajn $T$ is weakly special if there is a function $f$ from $T$ into the ordinals which is
\begin{itemize}
\item injective on chains, and satisfies
\item $f(t)<\hgt(t)$ for every $t\in T$.
\end{itemize}
Adding such a weak specialising function has a natural impediment, as explained in Claim 1.2. of \cite{Neemanapplications}.
The main theorem of \S4 of  \cite{Neemanapplications} shows that modulo such an impediment, ({`}extensivelly overlapped nodes'), a weak specialising 
function on a club of levels of an $\aleph_2$-Aronszajn  tree $T$, can be added using forcing with two kinds of models as side conditions.  

\subsection{The size of the continuum}\label{AsperoMota} Having seen some of the combinatorial obstacles to generalisations of PFA, we can now go back to basics: if we have an analogue of PFA for $\aleph_2$, what should the size of the continuum be in the resulting model? This axiom would
for example solve the Baumgartner problem for $\aleph_2$-dense sets of reals, that is prove that it is consistent modulo large cardinals that the continuum is at least $\aleph_3$ and every two $\aleph_2$-dense sets of reals are isomorphic. Neeman has a work in progress where he is believed to have solved this particular problem. He has solved several other specific problems
(see his work with Thomas Gilton in \cite{GiltonNeeman2}) too but an axiom solving these and many other questions for which we do not know if they are consistent with $2^{\aleph_0}=\aleph_3$, is another matter. The point is that, apart from MA, it is very hard to find axioms consistent with `the large continuum', meaning simply $2^{\aleph_0}\ge\aleph_3$. This problem is very old and was specifically stated in Shelah's ``Logical Drreams''\cite{Sh:E23} and re-iterated in \cite{Sh:E73}.

One obvious remark to make is that if indeed this analogue of properness is found for $\aleph_2$, call it $\aleph_2$-properness, then not every proper forcing can be $\aleph_2$-proper, since then the forcing axiom for $\aleph_2$-proper posets would imply PFA, which decides that the value of the continuum is $\aleph_2$. In pre-Neeman generalisations of proper forcing, mostly done by Andrzej Ros{\l}anowski and Shelah, such as in \cite{RoSh655} and others in that series of articles, generalisations of properness were proposed. These works concentrated on forcing which is $(<\lambda)$-closed, and has some analogue of properness at $\lambda$. In this scenario, the $(<\lambda)$-closure is used to preserve cardinals up to $\lambda$, so this does not address the question of the large continuum. Newer method's using side conditions and preserving two cardinals at a time can potentially overcome this problem. More recent work on iterations making the continuum large is by Asper{\'o} and Mota in \cite{AsperoMotaMA}, where they obtain a forcing axiom that they call
MA${}^{1.5}$. This axiom has many pleasing properties in obtaining certain consequences of PFA consistent with the continuum large. Most importantly, as Asper{\'o} and Mota showed in a further paper \cite{AsperoMotaMA2}, MA${}^{1.5}$ negates Moore's principle $\Mu$. This principle states:

$\Mu$: there is a sequence $\langle f_\xi:\,\xi<\omega_1\rangle$ of continuous
functions $f_\xi:\,\xi\to\omega$ such that for every club $C$ of $\omega_1$ there is some $\xi\in C$ satisfying that 
for each $k<\omega$, the set $f^{-1}(k)\cap C\cap \xi$ is unbounded in $C\cap\xi$.

It is still not known if certain weaker variants of this principle can fail in models where the continuum is larger than 
$\aleph_2$.

Another difficult question is to have a forcing axiom consistent with CH. Much on this is said in Shelah's book \cite{Sh_P}, for example in VII \S2, VII \S3 and XVIII \S1, and several different methods are proposed there. None satisfies what Shelah has put as a gold standard of iteration: the matching of the property $\varphi$ being iterated with the way that the iteration is done, so that the iteration preserves 
$\varphi$. Asper{\'o} and Mota studied this question through the method of symmetric models, in a series of papers, still in development.

\section{Conclusion} Clearly, Neeman's method and its relatives, many but not all of which we mentioned here, are 
very promising. They have revolutionised the way that we see iteration of forcing. Some other forcing axioms exist as well and many are in development. However, at this moment it is not clear how far they can go. Various combinatorial results show that $\aleph_1$ is after all, somewhat special. Perhaps it is because it is the successor of an `inaccessible' cardinal, which is not the case for any other $\aleph_n$. For whatever reason, not all natural numbers are the same when it comes to infinite combinatorics, as the numerous `obstacles' above show.

 At least to us, it seems that a serious upper bound to the natural development of forcing axioms presents itself at $\aleph_4$, if not before. By having this opinion we differ from some others in the field; for example Neeman feels that there is no real reason that the method should stop at any natural number (Neeman's Hausdorff Medal winning lecture, Vienna 2019). 
   
 Be it one way or another, it seems clear that the development of generalisations of proper forcing and forcing axioms in general is one of the most interesting questions in set theory at the moment and that it will bring us new understanding of infinite combinatorics. Moreover, it looks like such a development must pass through a fine analysis of the obstacles mentioned in the above.

\bibliographystyle{plain}
\bibliography{../bibliomaster}

\begin{thebibliography}{10}

\bibitem{AbrShe83}
Uri Abraham and Saharon Shelah.
\newblock Forcing closed unbounded sets.
\newblock {\em J. Symb. Log.}, 48(3):643--657, 1983.

\bibitem{AsperoMotaMA2}
David Asper{{\'o}} and Miguel~Angel Mota.
\newblock Forcing consequences of {PFA} together with the continuum large.
\newblock {\em Trans. Amer. Math. Soc.}, 367(9):6103--6129, 2015.

\bibitem{AsperoMotaMA}
David Asper{{\'o}} and Miguel~Angel Mota.
\newblock A generalization of {M}artin's axiom.
\newblock {\em Israel J. Math}, 210:193--231, 2015.

\bibitem{AvrShe-notMA}
Uri Avraham and Saharon Shelah.
\newblock Martin's {A}xiom does not imply that every two $\aleph_1$-dense sets
  of reals are isomorphic.
\newblock {\em Israel J. Math}, 38(1-2):161--176, 1981.

\bibitem{BartoszynskiJudah}
Tomek Bartoszy\'nski and Haim Judah.
\newblock {\em Set Theory: On the Structure of the Real Line}.
\newblock A.K. Peters, Ltd., 1995.

\bibitem{Baum-alep1d}
James~E. Baumgartner.
\newblock All $\aleph_1$-dense sets of reals can be isomorphic.
\newblock {\em Fund. Math}, 79(2):101--106, 1973.

\bibitem{5authorsforcing}
James Cummings, Mirna D{\v{z}}amonja, Menachem Magidor, Charles Morgan, and
  Saharon Shelah.
\newblock A framework for forcing constructions at successors of singular
  cardinals.
\newblock {\em Trans. Amer. Math. Soc.}, 369(10):7405--7441, 2017.

\bibitem{DoDz}
Gregor Dolinar and Mirna D{\v{z}}amonja.
\newblock Forcing $\square_{\omega_1}$ with finite conditions.
\newblock {\em Ann. Pure Appl. Logic}, 164:49--64, 2013.

\bibitem{Dowsideconditions}
Alan Dow.
\newblock Generalized side conditions and {M}oore-{M}r{\'o}wka.
\newblock {\em Topology Appl.}, 197:75--101, 2016.

\bibitem{Mirnaonthetree}
Mirna D\v{z}amonja.
\newblock On wide $\aleph_2$-{A}ronszajn trees.
\newblock in progress.

\bibitem{fasttrack}
Mirna D{\v{z}}amonja.
\newblock {\em Fast Track to Forcing}.
\newblock Cambridge University Press, 2020.

\bibitem{wideAronszajn}
Mirna D{\v{z}}amonja and Saharon Shelah.
\newblock On wide {A}ronszajn trees in the presence of {M}{A}.
\newblock {\em J. Symb. Log.}, 2020.
\newblock prepublication DOI: 10.1017/jsl.2020.42.

\bibitem{FMS}
Matthew Foreman, Menachem Magidor, and Saharon Shelah.
\newblock Martin's {M}aximum, saturated ideals, and non-regular ultrafilters.
  {I}.
\newblock {\em Ann. of Math. (2)}, 127(1):1--47, 1988.

\bibitem{Fremlin}
David~H. Fremlin.
\newblock {\em Consequences of Martin's Axiom}.
\newblock Cambridge Tracts in Mathematics. Cambridge University Press, 1984.

\bibitem{Fried}
Sy~David Friedman.
\newblock Forcing with finite conditions.
\newblock In Joan Bagaria and Stevo Todor{\v{c}}evi{\'{c}}, editors, {\em Set
  Theory: Centre de {R}ecerca {M}atem{\`{a}}tica, {B}arcelona 2003--04}, Trends
  in Mathematics, pages 285--296. Birkh{\"{a}}user Verlag, Basel, 2006.

\bibitem{GiltonNeeman2}
Thomas Gilton and Itay Neeman.
\newblock Abraham-{R}ubin-{S}helah open colorings and a large continuum.
\newblock arXiv:1904.10516.

\bibitem{GitikMagidor}
Moti Gitik and Menachem Magidor.
\newblock {SPFA} by finite conditions.
\newblock {\em Arch. Math. Logic}, 55(5):649--661, 2016.

\bibitem{Gregory}
John Gregory.
\newblock Higher {S}ouslin trees and the generalized continuum hypothesis.
\newblock {\em Journal of Symbolic Logic}, 41(3):663--671, 1976.

\bibitem{Jen}
Ronald~Bj{\"o}rn Jensen.
\newblock The fine structure of the constructible hierarchy.
\newblock {\em Ann. Math. Logic}, 4:229--308; erratum, ibid. 4 (1972), 443,
  1972.
\newblock With a section by Jack Silver.

\bibitem{Koszmider}
Piotr Koszmider.
\newblock On strong chains of uncountable functions.
\newblock {\em Israel J. Math.}, 118:289--315, 2000.

\bibitem{Kruegersquare}
John Krueger.
\newblock Coherent adequate sets and forcing square.
\newblock {\em Fund. Math.}, 224(3):279--300, 2014.

\bibitem{KRUEGER20181044}
John Krueger.
\newblock Club isomorphisms on higher {A}ronszajn trees.
\newblock {\em Ann. Pure Appl. Logic}, 169(10):1044 -- 1081, 2018.

\bibitem{kunen}
Kenneth Kunen.
\newblock {\em Set Theory}, volume 102 of {\em Studies in Logic and the
  Foundations of Mathematics}.
\newblock North-Holland, 1980.

\bibitem{Kurepathesis}
Georges~(Giuro) Kurepa.
\newblock {\em Ensembles ordonn{\'e}s et ramifi{\'e}s}.
\newblock PhD thesis, Universit{\'e} de Paris, France, 1935.

\bibitem{Lambie-Henson-Rinot}
Chris Lambie-Hanson and Assaf Rinot.
\newblock A forcing axiom deciding the generalized {S}ouslin {H}ypothesis.
\newblock {\em Canad. J. Math.}, 71(2):437--470, 2019.

\bibitem{LaverShelah}
Richard Laver and Saharon Shelah.
\newblock The $\aleph_2$-{S}ouslin hypothesis.
\newblock {\em Trans. Amer. Math. Soc.}, 264(2):411--417, 1981.

\bibitem{MekVa}
Alan Mekler and Jouko V{\"{a}}{\"{a}}n{\"{a}}nen.
\newblock Trees and {$\Pi^{1}_{1}$}-subsets of $^{\omega_1}\omega_1$.
\newblock {\em J. Symb. Log.}, 58(3):{1052--1070}, 1993.

\bibitem{Mitchellmodel}
William Mitchell.
\newblock Aronszajn trees and the independence of the transfer property.
\newblock {\em Ann. Math. Logic}, 5:21--46, 1972.

\bibitem{Mitch}
William~J. Mitchell.
\newblock Adding closed unbounded subsets of {$\omega_2$} with finite forcing.
\newblock {\em Notre Dame J. Formal Log.}, 46(3):357--371, 2005.

\bibitem{BobanRahman}
Rahman Mohammadpour and Boban Veli{\v{c}}kovi{\'c}.
\newblock Guessing models and the approachability ideal.
\newblock arXiv:1802.10125 [math.LO]; v1. 2018, May 2019.

\bibitem{JustinstructureA}
Justin~Tatch Moore.
\newblock Structural analysis of {A}ronszajn trees.
\newblock In Costas Dimitracopoulos, Ludomir Newelski, Dag Normann, and John
  Steel, editors, {\em Proceedings of the 2005 {L}ogic {C}olloquium in
  {A}thens, {G}reece}, volume~28 of {\em Lecture Notes in Logic}, pages
  85--107, 2006.

\bibitem{Neeman}
Itay Neeman.
\newblock Forcing with sequences of models of two types.
\newblock {\em Notre Dame J. Form. Log.}, 55:265--298, 2014.

\bibitem{Neemanapplications}
Itay Neeman.
\newblock Two applications of finite side conditions at $\omega_2$.
\newblock {\em Archive for Mathematical Logic,}, 56:983--1036, 2017.

\bibitem{RoSh655}
Andrzej Ros{\l}anowski and Saharon Shelah.
\newblock Iteration of $\lambda$-complete forcing notions not collapsing
  $\lambda^+$.
\newblock {\em Int. J. Math. Math. Sci.}, 28:63--82, 2001.

\bibitem{Sh80}
Saharon Shelah.
\newblock A weak generalization of {MA} to higher cardinals.
\newblock {\em Israel J. Math.}, 30(4):297--306, 1978.

\bibitem{Sh108}
Saharon Shelah.
\newblock On successors of singular cardinals.
\newblock In {\em Logic Colloquium '78 (Mons, 1978)}, volume~97 of {\em Stud.
  Logic Foundations Math}, pages 357--380, Amsterdam-New York, 1979.
  North-Holland.

\bibitem{Sh:119}
Saharon Shelah.
\newblock {Iterated forcing and changing cofinalities}.
\newblock {\em Israel J. Math.}, 40(1):1--32, 1981.

\bibitem{Shelah_SPFA}
Saharon Shelah.
\newblock Semiproper forcing axiom implies {M}artin {M}aximum but not {${\rm
  PFA}^+$}.
\newblock {\em J. Symbol. Logic}, 52(2):360--367, 1987.

\bibitem{Sh_CA}
Saharon Shelah.
\newblock {\em Cardinal Arithmetic}, volume~29 of {\em Oxford Logic Guides}.
\newblock The Clarendon Press Oxford University Press, New York, 1994.
\newblock Oxford Science Publications.

\bibitem{Sh_P}
Saharon Shelah.
\newblock {\em Proper and Improper Forcing}.
\newblock Perspectives in Mathematical Logic. Springer-Verlag, Berlin, 2nd
  edition, 1998.

\bibitem{Sh:E23}
Saharon Shelah.
\newblock {Logical dreams}.
\newblock {\em Bull. Amer. Math. Soc. (N.S.)}, 40(2):203--228, 2003.
\newblock \href{https://arxiv.org/abs/math/0211398}{arXiv: math/0211398}.

\bibitem{Sh:784}
Saharon Shelah.
\newblock {Forcing axiom failure for any $\lambda>\aleph_1$}.
\newblock {\em Arch. Math. Logic}, 43(3):285--295, 2004.
\newblock \href{https://arxiv.org/abs/math/0112286}{arXiv: math/0112286}.

\bibitem{SheDiamonds}
Saharon Shelah.
\newblock Diamonds.
\newblock {\em Proc. Amer. Math. Soc.}, 183:2151--2161, 2010.

\bibitem{Shelah-flat}
Saharon Shelah.
\newblock On long increasing chains modulo flat ideals.
\newblock {\em Math. Logic Quarterly}, 56(4):397--399, 2010.

\bibitem{Sh:E73}
Saharon Shelah.
\newblock {Reflecting on logical dreams}.
\newblock In {\em {Interpreting G\"odel}}, pages 242--255. Cambridge Univ.
  Press, Cambridge, 2014.

\bibitem{Todorcevic_square}
Stevo Todor{\v{c}}evi{\'c}.
\newblock A note on the {P}roper {F}orcing {A}xiom.
\newblock In {\em Axiomatic Set Theory (Boulder, Colorado 1983)}, volume~31 of
  {\em Contemporary Mathematics}, pages 209--218. Amer. Math. Soc., Providence,
  Rhode Island, 1984.

\bibitem{Stevoontrees}
Stevo Todor{\v c}evi{\'c}.
\newblock Trees and linearly ordered sets.
\newblock In Kenneth Kunen and Jerry~E. Vaughan, editors, {\em Handbook of
  set-theoretic topology}, pages 235--293. North Holland, Amsterdam, 1984.

\bibitem{stevoOCA}
Stevo Todor{\v c}evi{\'c}.
\newblock {\em Partition Problems in Topology}.
\newblock Amer. Math. Soc., Providence, Rhode Island, 1989.

\bibitem{Vel_FACA}
Boban Veli{\v{c}}kovi{\'c}.
\newblock Forcing axioms and cardinal arithmetic.
\newblock In {\em Logic {C}olloquium 2006}, Lect. Notes Log., pages 328--360.
  Assoc. Symbol. Logic, Chicago, IL, 2009.

\bibitem{Bobanvirtual}
Boban Veli{\v c}kovi{\'c}.
\newblock Iteration of semiproper forcing revisited.
\newblock arXiv 1410.5095 in math.LO, 2014.

\bibitem{Ve17}
Boban Veli{\v{c}}kovi{\'c}.
\newblock {PFA} and the non stationary ideal.
\newblock {\em Proc. Amer. Math. Soc.}, 146(2):791--802, 2018.

\bibitem{VeVe}
Boban Veli{\v{c}}kovi{\'c} and Giorgio Venturi.
\newblock Proper forcing remastered.
\newblock In James Cummings and Ernst Schimmerling, editors, {\em Appalachian
  Set Theory}, LMS Lecture Notes Series. Cambridge {U}niversity {P}ress, 2012.

\end{thebibliography}

\end{document}